\title{Local H\"{o}lder continuity property of the Densities of Solutions of SDEs with Singular Coefficients
\footnote{This research has been supported by grants of the Japanese government and it profited from fruitful discussions with Stefano de Marco.}
}
\author{Masafumi Hayashi\\
 University of the Ryukyus, and Japan Science and Technology Agency\\
 Department of Mathematical Sciences, Faculty of Science,
 \\ Nishihara-cho, Okinawa 903-0213, Japan.\\
E-mail:hayashim6@gmail.com\\
\\
 Arturo Kohatsu-Higa\\
Ritsumeikan University and Japan Science and Technology Agency\\
Department of Mathematical Sciences\\
1-1-1 Nojihigashi,\\
Kusatsu, Shiga, 525-8577, Japan \\
E-mail: arturokohatsu@gmail.com\\
\\
G\^o Y\^uki\\
Ritsumeikan University and Japan Science and Technology Agency\\
Department of Mathematical Sciences\\
1-1-1 Nojihigashi,\\
Kusatsu, Shiga, 525-8577, Japan\\
E-mail: go.yuki153@gmail.com
}
\theoremstyle{plain}
\newtheorem{theorem}{Theorem}
\theoremstyle{plain}
\newtheorem{prp}{Proposition}
\theoremstyle{plain}
\newtheorem{lem}{Lemma}
\theoremstyle{plain}
\newtheorem{cor}{Corollary}
\theoremstyle{plain}
\newtheorem{remark}{Remark}
\theoremstyle{plain}
\newtheorem{definition}{Definition}
\begin{document}
\maketitle
\begin{abstract}
We prove that the weak solution of a uniformly elliptic stochastic differential equation with locally smooth  diffusion coefficient and H\"{o}lder continuous drift has a H\"{o}lder continuous density function. 
This result complements recent results of Fournier-Printems \cite{F1}, where the density is shown to exist if both coefficients are H\"{o}lder continuous and exemplifies the role of the drift coefficient in the regularity of the density of a diffusion.
\end{abstract}

Key words: Malliavin Calculus, non-smooth drift, density function.

2010 Mathematics Subject Classification: Primary 60H07, Secondary 60H10 .

\section{Introduction}

Malliavin calculus is well known as a method to prove the regularity of a solution of a SDE (stochastic differential equation).
Especially,  if we assume that the coefficients of a hypoelliptic SDE are bounded functions with bounded 
derivatives of any order, then the solution has a smooth density (see, for example, Nualart\cite{N1}). In recent years, one of the directions in this area is to develop tools to deal with the case of non-smooth coefficients. 

In this article, we consider the one dimensional SDE of the form $dX_t=\sigma(X_t)dB_t+b(X_t)dt$ on a probability space $(\Omega,\mathcal F,Q),$ where $\{B_t\}_{0\le t}$ is a one dimensional standard Brownian motion.
The main purpose of this paper is to prove the local smoothness of the density of the SDE under some weak assumptions on the drift  coefficient $b$. 

Our assumptions, roughly speaking, are local boundedness of the coefficients, H\"older continuity of $b$, uniformly ellipticity and local smoothness of $\sigma$. 
More details about the assumptions will be given later. 

Under these assumptions, 
we will see that the density of the solution of the above SDE exists on the set in which $\sigma$ is smooth. 
Furthermore, we also show that the density is $\gamma$-H\"older continuous,
 with $\gamma \in (0,\alpha )$ and $\alpha$ is the exponent of the H\"older continuity of $b$.
This shows that the drift coefficient may be a determining factor in the regularity of the density.

Some related results have already been obtained for this problem, for example, 
Fournier and Printems \cite{F1} proved in the case that $\sigma$ is $\alpha$-H\"older continuous with $\alpha>\frac{1}{2}$ and 
$b$ is at most linear growth
then the density of $X_t$ exists. 
Their approach is very simple. 
The key idea is to consider the following random variable which approximates $X_t$; 
$
Z_{\varepsilon}:=X_{t-\varepsilon} + \sigma(X_{t-\varepsilon})(B_t-B_{t-\varepsilon})
$ 
for $\varepsilon\in(0,1)$ and 
using some classical lemmas about the existence of the density and conditions of the coefficients. 
In that case, they showed the existence of the density on the set $\{x\in\mathbb R;\sigma(x)\neq 0\}$. 
A careful analysis of their method shows that the argument for the proof can not be used to obtain any further properties of the density (such as the H\"{o}lder continuity of the density).

For a multi-dimensional SDE whose coefficients depends on time, 
Kusuoka \cite{K1} introduced a space denoted by $V_h$ which is larger than the usual Sobolev space and showed the relation 
between the space $V_h$ and absolute continuity of random variables. 
According to \cite{K1}, one can show the existence of the density of $X_t$ on the set 
$\{x\in\mathbb R;\sigma(x)\neq 0\}$ when the coefficients are bounded, $\sigma$ is twice continuously differentiable on 
$\{x\in\mathbb R;\sigma(x)\neq 0\}$ and $b$ is Lipschitz continuous on $\mathbb R$. 

Our result uses a probabilistic approach to the regularity problem
of fundamental solution to  parabolic equations.
In the theory of parabolic equations, there are some regularity results which we briefly compare here.
In \cite{Friedman}, one can find some classical results on the existence and regularity of fundamental solutions of parabolic equations under global H\"{o}lder continuity 
assumptions on the coefficients of the parabolic equation.
In particular, the H\"{o}lder continuity of coefficients yields higher order smoothness 
of the solution to parabolic equations.

In the modern theory of parabolic equations, these equations are solved in
Sobolev spaces and by using embedding theorems, one can find a modification of a solution such that 
this solution might have Holder continuous derivatives (see \cite{Lady} or \cite{Krylov}).
Thie arguments in this approach 
are somewhat global.
 
On the other hand, in this paper, we focus our attention on the local regularity problem:
Does the local regularity of coefficients yield the same property of solution to parabolic equations?
In particular, except for the existence and uniqueness (in law) of weak solution to stochastic
differential equation, our assumptions are restricted only on a neighborhood of some point.
In \cite{Cherny}, the reader can find some sufficient conditions so that the stochastic differential equation under consideration admits a unique weak solution. 

The main tool of our approach is Malliavin calculus, but in general, due to our local hypotheses, 
the stochastic process $X$ will not be differentiable in the Malliavin sense. 
To solve this problem, we use Girsanov's theorem in order to reduce our study to the solution of the equation 
$dX_t=\sigma(X_t)dW_t$ where $W$ is a new Brownian motion under a new probability measure $P$.
In order to deal with the local smoothness of the diffusion coefficient, 
we use stopping times in order to introduce a localization argument.
This localization will allow us to change the process $X$ by a regularized version $\bar{X}$ for which Malliavin Calculus is applicable.

The remaining problem is how to deal with the change of measure which contains the non-smooth function $b$ 
which implies that this random variable is non differentiable. 
For this reason, we introduce an approximation of the change of measure which is differentiable. 
Finally, to end the argument we only need to measure the distance between the change of measure and its approximation by using 
the H\"older property of $b.$

As in \cite{F1}, we believe that the method introduced here can be generalized to other situations such as SDE's with random coefficients or L\'evy driven SDE with Brownian component. 
For examples of applications of the results obtained here, we refer the reader to \cite{F1} and \cite{S1}.

\section{Preliminaries and Notation}
In  this chapter, we introduce some notations and give a brief introduction to Malliavin calculus. 
\subsection{Some Basic Notations}
For $n\in\mathbb N,$ we denote by $C_b^{n}$ the class of bounded and $n$-times continuously differentiable functions 
with bounded derivatives defined in $\mathbb R$ taking values in $\mathbb R$.
Similarly, we define $C_b^{\infty}$ as the class of bounded smooth functions defined in $\mathbb R$ and taking values in $\mathbb R$ with bounded derivatives of any order and $C_p^{\infty}$ as the class of all infintely continuously differentiable functions defined in $\mathbb R$ and taking values in $\mathbb R$ such that the function and its derivatives have at most polynomial growth. 
For a bounded function $f:$ $\mathbb R \rightarrow \mathbb R$, we denote by $\|f\|_{\infty}$ the supremum norm of $f$.

Let $X$ be a random variable on the probability space $(\Omega,\mathcal F,P).$
For $1\le p<\infty,$ we denote
\begin{equation*}
\|X\|_{L^p(P)}:=E_P[|X|^p]^{\frac{1}{p}},
\end{equation*} 
where $E_P[X]$ means the expectation of $X$ with respect to $P$. 
\subsection{Brief Introduction to Malliavin Calculus}
\label{intM}
Now we turn to introduce Malliavin calculus. 
For the proofs of the following results and more details about Malliavin calculus, see \cite{N1}. 
In this chapter, we abbreviate $\|\cdot\|_{L^p(P)}$ by $\|\cdot\|_{L^p}$. 

Fix $T>0$. 
For any measurable function $h\in\mathcal H:=L^2([0,T];\mathbb R),$ we denote  its stochastic integral by
\begin{align*}
W(h):=\int_0^Th(s)dW_s,
\end{align*}
where $\{W_t\}_{t\ge 0}$ is a one dimensional Brownian motion. 

Define 
\begin{align*} 
\mathcal S:=\{F:F=f(W(h_1),\cdots ,W(h_n));h_1,\cdots ,h_n\in\mathcal H,
f\in C_p^{\infty}(\mathbb R)\}.
\end{align*}

For $F\in\mathcal S$ and $t\in[0,T]$, we define the $H$-derivative $($or the Malliavin derivative$)$ as
\begin{align*}
D_tF:=\sum_{i=1}^{n}\partial_if(W(h_1),\cdots ,W(h_n))h_i(t)
\end{align*}
and for $k\in\mathbb Z_+$ and $p\ge 1$, define the norm $\|\cdot\|_{k,p}$ by
\begin{align*}
&\|F\|_{k,p}:=
\left\{
E[|F|^p] + \sum_{j=1}^kE[\|D^jF\|_{\mathcal H^{\otimes j}}^p] 
\right\}^{\frac{1}{p}}, \\
&{\rm where} \\
&\|D^jF\|_{\mathcal H^{\otimes j}}:=\int_0^T\cdots\int_0^T|D_{s_1}\cdots D_{s_j}F|^2ds_1\cdots ds_j.
\end{align*}
As usual, $\|F\|_{0,p}:=\|F\|_{L^p}.$ 

We will denote by $\mathbb D^{k,p}$ the completion of $\mathcal S$ with respect to the norm $\|\cdot\|_{k,p}$ and by $\mathbb D^{\infty}:=\cap_{k,p}\mathbb D^{k,p}$. 
Similarly, for a Hilbert space $V$ and $V$-valued random variables, one can define $\mathbb D^{k,p}(V)$ and $\mathbb D^{\infty}(V):=\cap_{k,p}\mathbb D^{k,p}(V)$. 
In particular, for a $\mathbb R$-valued stochastic process $\{u_s\}_{0\le s \le T}$, we define the norm 
\begin{align*}
\|u\|_{k,p}:=\left\{
E[\|u\|_{\mathcal H}^p] + \sum_{j=1}^kE[\|D^ju\|_{\mathcal H^{\otimes j}}^p] 
\right\}^{\frac{1}{p}}.
\end{align*}
We define the Skorokhod integral, as the dual operator of $D$ and denote it by $\delta.$ 

Let $\{\mathcal F_s\}_{0\le s\le T}$ be the filtration generated by our Brownian motion $\{W_s\}_{0\le s\le T}$. 
It is a well known fact that for $\{\mathcal F_s\}_{0\le s\le T}$-adapted $L^2$ stochastic process $\{u_s\}_{0\le s\le T}$, its Skorokhod integral coincides with its It\^o integral. That is,
\begin{align*}
\delta(u)=\int_0^Tu_sdW_s.
\end{align*}

Moreover, if $\{u_s\}_{0\le s\le T}$ belongs to the domain of $\delta$ $($for example, $u\in\mathbb D^{1,2}(\mathcal H))$, $F\in\mathbb D^{1,2}$ and they satisfy $E[F^2\int_0^Tu_s^2ds]$ is finite, then 
\begin{align*}
\delta(Fu_t)=F\delta(u)-\int_0^t(D_sF)u_sds
\end{align*}
is hold provided the right hand side of the above equation is square integrable. 

For $F=(F^1,\cdots ,F^d)\in(\mathbb D^{1,2})^d$, define the $d\times d$-matrix $M_F$ by
\begin{align*}
M_F^{ij}:=\langle DF^i,DF^j\rangle_{\mathcal H}.
\end{align*}
This $M_F$ is called Malliavin covariance matrix. 
The random vector $F$ is non-degenerate if for any $p\ge 1$, 
\begin{align*}
E[(\det M_F)^{-p}]<+\infty .
\end{align*}
The following proposition, so called integration by parts formula (in Malliavin's sense), plays an important role in this paper. 
\begin{prp}\label{Int_By_Parts}
(Integration by parts formula)

Let $F,G\in\mathbb D^{\infty}$ be nondegenerate and $\varphi\in C_p^{\infty}$. 
Then for any $n\in\mathbb N$, there exists random variable $H_n\in\mathbb D^{\infty}$ such that
\begin{align*}
E\left[\varphi^{(n)}(F)G\right]=E\left[\varphi(F)H_n(F,G)\right].
\end{align*}
Moreover $H_n$ is recursively given by 
\begin{align*}
&H_1(F,G):=\delta(G{M_F}^{-p}DF)\\
&H_k(F,G):=H_1(F,H_{k-1}(F,G))\ {\rm for}\ 2\le k\le n
\end{align*}
and for $1\le p<q<+\infty$, we have
\begin{align*}
\|H_n(F,G)\|_{L^p}\le c_{p,q}\|{M_F}^{-1}DF\|_{n,r2^{n-1}}^n\|G\|_{n,q}
\end{align*}
where $r$ satisfies that $\frac{1}{p}=\frac{1}{q}+\frac{1}{r}$ and $c_{p,q}$ is a constant depends only on $p$ and $q$.
\end{prp}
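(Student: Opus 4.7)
The strategy is to establish the $n=1$ identity by the Malliavin chain rule together with the duality between $D$ and $\delta$, iterate to obtain general $n$, and then use Meyer's inequalities together with the Leibniz rule to get the $L^p$ bound.

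For the base case $n=1$, since $\varphi \in C_p^\infty$ and $F \in \mathbb{D}^\infty$, the chain rule gives $D(\varphi(F)) = \varphi'(F)\, DF$ in $\mathcal H$. Taking the inner product with $DF$ yields $\langle D(\varphi(F)), DF \rangle_{\mathcal H} = \varphi'(F)\, M_F$. The non-degeneracy hypothesis ensures $M_F^{-1} \in \cap_p L^p$, so multiplying by $G M_F^{-1}$ and taking expectations,
\[
E[\varphi'(F)\, G] \;=\; E\bigl[\langle D(\varphi(F)),\, G\, M_F^{-1}\, DF \rangle_{\mathcal H}\bigr].
\]
Provided the integrand lies in the domain of $\delta$ (which can be verified using $F, G \in \mathbb{D}^\infty$ and the integrability of $M_F^{-1}$), the $D$--$\delta$ duality yields the claim $E[\varphi'(F)\, G] = E[\varphi(F)\, H_1(F,G)]$ with $H_1(F,G) := \delta(G\, M_F^{-1}\, DF)$.

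For general $n$, I would apply the $n=1$ identity with $\varphi^{(n-1)}$ in place of $\varphi$, giving $E[\varphi^{(n)}(F)\, G] = E[\varphi^{(n-1)}(F)\, H_1(F, G)]$, and then iteratively replace $G$ by $H_1(F, G)$; after $n$ steps one obtains exactly the recursion $H_n(F,G) = H_1(F, H_{n-1}(F,G))$ stated in the proposition.

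For the $L^p$-estimate, the central tool is Meyer's inequality $\|\delta(u)\|_{k-1, p} \leq c_{k,p}\|u\|_{k,p}$, combined with the Leibniz rule $D(G\, M_F^{-1}\, DF) = (DG)\, M_F^{-1}\, DF + G\, D(M_F^{-1}\, DF)$. At each step of the recursion one uses Meyer's inequality to pass from $\|H_k\|_{L^p}$ to a Sobolev-type norm of the argument, expands via Leibniz, and applies H\"{o}lder's inequality with exponents $\tfrac{1}{p}=\tfrac{1}{q}+\tfrac{1}{r}$ to split the resulting product into a $G$-factor and a $M_F^{-1}DF$-factor. The conjugate exponent required on the second factor doubles at each splitting, which is precisely how the factor $2^{n-1}$ arises. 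The main obstacle is the careful bookkeeping of exponents and derivative orders through the recursion; done by induction on $n$, it yields the stated bound $\|H_n(F,G)\|_{L^p} \leq c_{p,q}\|M_F^{-1} DF\|_{n, r\, 2^{n-1}}^{\,n}\,\|G\|_{n,q}$.
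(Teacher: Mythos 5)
Your proposal is correct and takes essentially the same route as the paper's source: the paper does not prove this proposition itself but refers to Nualart \cite{N1}, where precisely this chain-rule/duality argument for $n=1$, the iteration giving the recursion $H_k=H_1(F,H_{k-1})$, and the Meyer-inequality/Leibniz/H\"older bookkeeping (with the exponent doubling that produces $r2^{n-1}$) are carried out. The only routine caveats are that one needs $M_F^{-1}\in\mathbb D^{\infty}$ (not merely $M_F^{-1}\in\cap_p L^p$, which requires the standard approximation $(M_F+\epsilon)^{-1}$) to justify both the domain-of-$\delta$ step and the Sobolev-norm induction, and that the paper's $M_F^{-p}$ in the definition of $H_1$ is a typo for $M_F^{-1}$, exactly as you wrote it.
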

\section{Preparatory Lemmas}
The basic argument to study the density of a random variable follows from the study of its characteristic function. The first basic result is the following.
\begin{theorem}\label{Levy}$($L\'evy's inversion theorem$)$
Let $(\Omega,\mathcal F,P)$ be a probability space and $X$ be a $\mathbb R$-valued random variable defined on that space.
If  
$\displaystyle
\varphi(\theta):=E[e^{i\theta X}],
$ 
the characteristic function of the $X$,
belongs to $L^1(\mathbb R)$, then 
$f_X$, the density function of the law of $X$, exists and is continuous.
Moreover, 
\begin{equation*}
f_X(x)=\frac{1}{2\pi}\int_{-\infty}^{+\infty}e^{-i\theta x}\varphi(\theta)d\theta
\end{equation*}
for any $x$ in $\mathbb R.$
\end{theorem}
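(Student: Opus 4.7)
The plan is to define the candidate density
$$f(x):=\frac{1}{2\pi}\int_{-\infty}^{+\infty}e^{-i\theta x}\varphi(\theta)\,d\theta,\qquad x\in\mathbb R,$$
verify its regularity, and then identify it as the density of the law of $X$. Continuity and boundedness of $f$ follow immediately from $\varphi\in L^{1}(\mathbb R)$ and dominated convergence applied to the integrand $e^{-i\theta x}\varphi(\theta)$, which is continuous in $x$ and dominated in modulus by $|\varphi(\theta)|$ uniformly in $x$; in particular one gets $\|f\|_{\infty}\le (2\pi)^{-1}\|\varphi\|_{L^{1}}$. A priori $f$ is complex-valued, but its reality and non-negativity will be forced a posteriori by the identification with a probability density and the continuity just established.

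To carry out that identification, I would use a Gaussian smoothing argument. Let $Z$ be a standard normal random variable independent of $X$ (on an enlargement of the space if necessary), and set $X_{\varepsilon}:=X+\varepsilon Z$ for $\varepsilon>0$. The characteristic function of $X_{\varepsilon}$ equals $\varphi(\theta)e^{-\varepsilon^{2}\theta^{2}/2}$, which lies in the Schwartz class, so $X_{\varepsilon}$ admits a smooth density $f_{\varepsilon}$. Writing $f_{\varepsilon}$ as the convolution of the law of $X$ with the Gaussian density of $\varepsilon Z$, one applies Fubini to interchange the resulting integrals and represents the Gaussian density via its own (classical) inverse Fourier formula, obtaining the closed form
$$f_{\varepsilon}(x)=\frac{1}{2\pi}\int_{-\infty}^{+\infty}e^{-i\theta x}\varphi(\theta)e^{-\varepsilon^{2}\theta^{2}/2}\,d\theta.$$
Two limits are then available as $\varepsilon\to 0^{+}$: pointwise in $x$, dominated convergence with majorant $|\varphi|$ yields $f_{\varepsilon}(x)\to f(x)$; and $X_{\varepsilon}\to X$ almost surely, so the laws of $X_{\varepsilon}$ converge weakly to the law of $X$.

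It remains to glue these two facts together. For any $g\in C_{c}(\mathbb R)$, weak convergence gives $E[g(X_{\varepsilon})]\to E[g(X)]$; meanwhile the uniform bound $|f_{\varepsilon}(x)|\le (2\pi)^{-1}\|\varphi\|_{L^{1}}$ together with the compact support of $g$ permits dominated convergence inside the integral, yielding $\int g\,f_{\varepsilon}\,dx\to\int g\,f\,dx$. Consequently $E[g(X)]=\int g(x)f(x)\,dx$ for every $g\in C_{c}(\mathbb R)$, and since $C_{c}(\mathbb R)$ separates finite Borel measures on $\mathbb R$, the law of $X$ must coincide with $f(x)\,dx$. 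This also forces $f\ge 0$ almost everywhere, which by continuity extends to every point, and delivers the stated inversion formula.

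The calculations are largely mechanical; the only point that genuinely deserves care is the concurrent use of the $L^{1}$ hypothesis on $\varphi$—which simultaneously supplies the dominating function for the pointwise convergence $f_{\varepsilon}\to f$ and the uniform $L^{\infty}$ bound on $f_{\varepsilon}$—and the weak convergence $X_{\varepsilon}\Rightarrow X$. This interplay is what makes the Gaussian mollification work and is, to my mind, the main technical hinge of the proof.
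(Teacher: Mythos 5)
Your argument is correct. Note that the paper itself gives no proof of Theorem \ref{Levy}: it simply cites Williams \cite{W1}, where the classical argument integrates the truncated kernel $\frac{e^{-i\theta a}-e^{-i\theta b}}{i\theta}\varphi(\theta)$ over $[-T,T]$, applies Fubini and the Dirichlet integral $\int_0^\infty \frac{\sin u}{u}\,du=\frac{\pi}{2}$ to recover $\mu\bigl((a,b)\bigr)$ plus half the endpoint masses, and only then specializes to $\varphi\in L^1$. Your Gaussian-mollification route is a genuinely different, equally standard path: it replaces the oscillatory-integral computation by the smoothing identity $f_\varepsilon(x)=\frac{1}{2\pi}\int_{-\infty}^{+\infty} e^{-i\theta x}\varphi(\theta)e^{-\varepsilon^2\theta^2/2}\,d\theta$ together with two soft limits (dominated convergence for $f_\varepsilon\to f$ and weak convergence of the laws of $X_\varepsilon$), and it yields directly the continuity and the bound $\|f\|_\infty\le(2\pi)^{-1}\|\varphi\|_{L^1(\mathbb R)}$, which is exactly what the paper uses in Corollary \ref{Cor1}. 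Two minor points: the claim that $\varphi(\theta)e^{-\varepsilon^2\theta^2/2}$ lies in the Schwartz class is inaccurate ($\varphi$ need not be smooth), but it is also unnecessary, since your convolution representation of the law of $X_\varepsilon$ with the Gaussian density already gives both the existence of $f_\varepsilon$ and the closed formula; and in the final identification $E[g(X)]=\int g f\,dx$ for $g\in C_c(\mathbb R)$, the measure $f\,dx$ is a priori only a locally finite complex measure, so the uniqueness step should be read as Riesz uniqueness on compact intervals, after which $f\ge 0$ everywhere (by continuity) and $\int f\,dx=1$ follow as you indicate.
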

This result is very well-known result which is called ``L\'evy's inversion theorem" for the proof of this, see e.g. \cite{W1}.
The following corollary gives us a more precise criterion for the H\"{o}lder continuity of the density.
\begin{cor}\label{Cor1}
Let $X$ be a random variable under the same setting as in Theorem $\ref{Levy}$ and $\varphi$ be its characteristic function. 
Assume that the following inequality holds for some positive constant $C$ and $0<\gamma <1$.
\begin{equation*}
|\varphi(\theta)|\le 1\wedge(C|\theta|^{-(1+\gamma)}).
\end{equation*}
Then the density function of the law of $X$ exists and is $\alpha$-H\"older continuous for any $0<\alpha<\gamma$.
\end{cor}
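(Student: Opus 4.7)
The plan is to deduce the corollary from Theorem \ref{Levy} by controlling the modulus of continuity of the inverse Fourier integral. First I would check that the hypothesis $|\varphi(\theta)| \le 1 \wedge (C|\theta|^{-(1+\gamma)})$ makes $\varphi$ integrable on $\mathbb{R}$: the bound $1$ handles a neighborhood of $0$, and since $1+\gamma > 1$, the tail $C|\theta|^{-(1+\gamma)}$ is integrable at infinity. Thus Theorem \ref{Levy} applies and yields the existence of a continuous density
\begin{equation*}
f_X(x) = \frac{1}{2\pi}\int_{-\infty}^{+\infty} e^{-i\theta x}\varphi(\theta)\,d\theta.
\end{equation*}

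Next, to get the H\"older modulus, I would fix $\alpha \in (0,\gamma)$ and estimate directly
\begin{equation*}
f_X(x) - f_X(y) = \frac{1}{2\pi}\int_{-\infty}^{+\infty}\bigl(e^{-i\theta x} - e^{-i\theta y}\bigr)\varphi(\theta)\,d\theta.
\end{equation*}
The key pointwise bound is the interpolation inequality $|e^{-i\theta x} - e^{-i\theta y}| \le 2 \wedge |\theta(x-y)| \le 2^{1-\alpha}|\theta|^{\alpha}|x-y|^{\alpha}$, which follows from $a \wedge b \le a^{\alpha}b^{1-\alpha}$ for non-negative $a,b$. Substituting this gives
\begin{equation*}
|f_X(x) - f_X(y)| \le \frac{2^{1-\alpha}}{2\pi}|x-y|^{\alpha}\int_{-\infty}^{+\infty}|\theta|^{\alpha}|\varphi(\theta)|\,d\theta,
\end{equation*}
so the whole game reduces to showing that the $\theta$-integral is finite.

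For this last step I would split the integral at $|\theta|=1$. On $\{|\theta|\le 1\}$ I use $|\varphi(\theta)|\le 1$, obtaining a finite contribution $2/(\alpha+1)$. On $\{|\theta|>1\}$ I use the decay bound $|\varphi(\theta)|\le C|\theta|^{-(1+\gamma)}$, so the integrand is bounded by $C|\theta|^{-(1+\gamma-\alpha)}$; this is integrable at infinity exactly because $\alpha<\gamma$, yielding a contribution of $2C/(\gamma-\alpha)$. Combining the two pieces gives a constant $C_{\alpha}$, independent of $x,y$, with $|f_X(x)-f_X(y)| \le C_{\alpha}|x-y|^{\alpha}$, which is the desired H\"older continuity.

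I do not anticipate a substantive obstacle: the only delicate point is having the right pointwise interpolation of $|e^{-i\theta x}-e^{-i\theta y}|$ so that $\alpha$-H\"older regularity can be exchanged against $\alpha$ powers of $|\theta|$ in the Fourier integral, and the constraint $\alpha<\gamma$ arises naturally (and sharply) from the requirement that $|\theta|^{\alpha-(1+\gamma)}$ be integrable at infinity.
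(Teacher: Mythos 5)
Your proposal is correct and follows essentially the same route as the paper: apply L\'evy's inversion theorem for existence and continuity, then bound $|e^{-i\theta x}-e^{-i\theta y}|$ by a constant times $|\theta|^{\alpha}|x-y|^{\alpha}$ and reduce the H\"older estimate to the finiteness of $\int_{\mathbb R}|\theta|^{\alpha}|\varphi(\theta)|\,d\theta$. The only difference is that you spell out the split of this integral at $|\theta|=1$ (and the integrability of $\varphi$ itself), which the paper leaves implicit.
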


\begin{proof}
Let $\alpha\in(0,\gamma)$.
The existence and continuity of the density immediately follows by Theorem $\ref{Levy}$.
We only show that the density is $\alpha$-H\"older continuous.
Let $f_{X}$ be the density of the law of $X$. 
Then by Theorem $\ref{Levy}$, we have
\begin{align*}
|f_X(x)-f_X(y)|&\le \frac{1}{2\pi}\int_{-\infty}^{+\infty}|e^{-i\theta x}-e^{-i\theta y}||\varphi(\theta)|d\theta \\
&\le
\frac{1}{2\pi}\int_{-\infty}^{+\infty}|e^{-i\theta y}||e^{-i\theta(x-y)}-1||\varphi(\theta)|d\theta\\
&\le
\frac{C_{\alpha}}{2\pi}\int_{-\infty}^{+\infty}|\theta x - \theta y|^{\alpha}|\varphi(\theta)|d\theta\\
&=
|x-y|^{\alpha}\frac{C_{\alpha}}{2\pi}\int_{-\infty}^{+\infty}|\theta|^{\alpha}|\varphi(\theta)|d\theta.
\end{align*}
By the hypothesis, the last integral is finite.
Hence, $f_X$ is $\alpha$-H\"older continuous. 
\end{proof}
Now we define the notion of local density function.
\begin{definition}\label{Def1}
Let $\varepsilon$ be a positive number and $y_0\in\mathbb R$. 
The random variable $X$ has a (local) density function $p$ on the set $B_{\varepsilon}(y_0)$ if 
\begin{align*}
E[f(X)]=\int_{\mathbb R}f(x)p(x)dx
\end{align*}
holds for any bounded continuous function $f$ whose support in $B_{\varepsilon}(y_0).$ 
\end{definition}
\begin{remark}\label{Rem_Def}
The above function $p$ corresponds to the density function of $X$ on the set $B_{\varepsilon}(y_0)$ provided $X$ has a density function, but $p$ may exist when $X$ does not have a density function. 
For example, if $X=0$ almost surely, then $X$ clearly does not have a density function. 
However, for any $y_0\in\mathbb R\setminus\{0\}$ and $0<\varepsilon<|y_0|$, the constant function $p=0$ satisfies the above definition.
\end{remark}
Although Corollary $\ref{Cor1}$ gives us a useful criterion about the global existence and continuity of the 
density function, we need another lemma which is used to show the local existence of the density function.
\begin{lem}\label{Lem1}
Assume that $X$ is a random variable under the same setting as in Theorem $\ref{Levy}$. 
Let $\varepsilon>0$ and $\phi_{\varepsilon}$ be an element of $C_{b}^{\infty}$ which satisfies that
\begin{align*}
1_{B_{\varepsilon}(0)}\le \phi_{\varepsilon} \le 1_{B_{2\varepsilon}(0)}.
\end{align*}
Fix $y_0\in\mathbb R$ and set 
$m_0:=E[\phi_{\varepsilon}(X-y_0)]$.  
If $m_0>0$, we define $\mathcal L_{y_0}$ as the probability measure on $\mathbb R$ such that
\begin{equation*}
\int_{\mathbb R}f(y)\mathcal L_{y_0}(dy)
=
\frac{1}{m_0}E[f(X)\phi_{\varepsilon}(X-y_0)],
\end{equation*}
for all continuous and bounded function $f.$ 

If $\mathcal L_{y_0}$ possesses a density $\tilde p_{y_0}$ then 
$p_{y_0}:=m_0\tilde p_{y_0}$ is the density function of $X$ on $B_{\varepsilon}(y_0)$. 

If $m_0=0,$ then the constant function $\tilde p_{y_0}=0$ is a density function of $X$ on $B_{\varepsilon}(y_0)$ even if $\mathcal L_{y_0}$ does not have a density.
\end{lem}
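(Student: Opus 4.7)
The plan is to take an arbitrary bounded continuous $f$ whose support lies in $B_\varepsilon(y_0)$ and show that $E[f(X)]$ equals $\int f(y) p_{y_0}(y)\,dy$ in each of the two cases, directly from Definition \ref{Def1}.

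The key observation, which makes the case $m_0>0$ essentially trivial, is that the cutoff $\phi_\varepsilon$ is identically $1$ on $B_\varepsilon(0)$, so $\phi_\varepsilon(x-y_0)=1$ whenever $x\in B_\varepsilon(y_0)$. Since $f$ vanishes outside $B_\varepsilon(y_0)$, this forces the pointwise identity $f(x)=f(x)\phi_\varepsilon(x-y_0)$ for all $x\in\mathbb{R}$. Applying this with $x=X$ gives $E[f(X)]=E[f(X)\phi_\varepsilon(X-y_0)]=m_0\int f(y)\,\mathcal{L}_{y_0}(dy)$ by the very definition of the probability measure $\mathcal{L}_{y_0}$. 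Assuming $\mathcal{L}_{y_0}$ has density $\tilde p_{y_0}$, the right-hand side equals $\int f(y)\,m_0\tilde p_{y_0}(y)\,dy=\int f(y)\,p_{y_0}(y)\,dy$, which is what Definition \ref{Def1} asks for.

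For the case $m_0=0$, I would first note the inequality $\phi_\varepsilon(X-y_0)\ge \mathbf{1}_{B_\varepsilon(y_0)}(X)$, which is immediate from the hypothesis $1_{B_\varepsilon(0)}\le \phi_\varepsilon$. Taking expectations and using $m_0=0$ forces $P(X\in B_\varepsilon(y_0))=0$. Since any bounded continuous $f$ supported in $B_\varepsilon(y_0)$ satisfies $f(X)=0$ off the event $\{X\in B_\varepsilon(y_0)\}$, we get $E[f(X)]=0=\int f(y)\cdot 0\,dy$, so the zero function qualifies as a local density.

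There is no real obstacle here; the entire content of the lemma is the trick of replacing $f(X)$ by $f(X)\phi_\varepsilon(X-y_0)$ and then renormalizing to a probability measure. The only mild point to be careful about is the degenerate case $m_0=0$, where one must argue via the lower bound $\phi_\varepsilon\ge 1_{B_\varepsilon(0)}$ rather than by dividing by $m_0$; this is what Remark \ref{Rem_Def} was previewing. I would present the proof as two short paragraphs reflecting these two cases.
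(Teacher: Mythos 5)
Your proposal is correct and follows essentially the same route as the paper: unwind the definitions of $p_{y_0}$ and $\mathcal L_{y_0}$ for test functions supported in $B_{\varepsilon}(y_0)$, using that $\phi_{\varepsilon}(\cdot-y_0)\equiv 1$ there, and treat $m_0=0$ separately via $\phi_{\varepsilon}\ge 1_{B_{\varepsilon}(0)}$. In fact you spell out the two points the paper leaves implicit (the identity $f(x)=f(x)\phi_{\varepsilon}(x-y_0)$ and the deduction $P(X\in B_{\varepsilon}(y_0))=0$ from $m_0=0$), so nothing is missing.
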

\begin{proof}
Let $m_0>0$ and $f$ be a continuous and bounded function whose support is a subset of $B_{\varepsilon}(y_0)$. 
By the definition of $p_{y_0},$ we have
\begin{align*}
\int_{\mathbb R}f(y)p_{y_0}(y)dy
&=m_0\int_{\mathbb R}f(y)\tilde p_{y_0}(y)dy\\
&=m_0\int_{\mathbb R}f(y)\mathcal L_{y_0}(dy)\\
&=E[f(X)].
\end{align*}
This implies that $p_{y_0}$ is a density function of $X$ on $B_{\varepsilon}(y_0)$. 

On the other hand, if $m_0=0$ then it is clear that 
\begin{align*}
E[f(X)]=0.
\end{align*}
Therefore $p_{y_0}=0$ is a density function of $X$ on $B_{\varepsilon}(y_0)$.
\end{proof}

\begin{remark} 
The function $\phi_\varepsilon$ in Lemma \ref{Lem1} can be constructed as follows. 
Let $a\in (\varepsilon,2\varepsilon)$. 
Define the function 
\begin{align*}
f_{a,2\varepsilon}(x):=
\left\{ 
\begin{array}{ll}
\exp(\frac{1}{x-2\varepsilon}-\frac{1}{x-a});\ \ &\text{ for }x\in(a,2\varepsilon) \\
0;&\text{ for }x\notin (a,2\varepsilon). \\
\end{array}
\right.
\end{align*}
and 
\begin{align*}
g_{a,2\varepsilon}(x):=\frac{\int_a^x f_{a,2\varepsilon}(y)dy}{\int_a^{2\varepsilon} f_{a,2\varepsilon}(y)dy}.
\end{align*}
Then $g_{a,2\varepsilon}\in C_b^{\infty}$ and 
\begin{align*}
g_{a,2\varepsilon}=
\left\{ 
\begin{array}{ll}
0\ \ &(x\le a) \\
1&(x\ge 2\varepsilon). \\
\end{array}
\right.
\end{align*}
Hence $\phi_{\varepsilon}$ may be defined as 
$\phi_{\varepsilon}:=g_{-2\varepsilon,-a}(1-g_{a,2\varepsilon})$.
\end{remark}
\ 

Before stating and proving our main result, we remind the reader that according to Corollary $\ref{Cor1}$ and Lemma $\ref{Lem1}$, if 
\begin{equation}\label{Goal}
|E_Q[e^{i\theta X_t}\phi_{\varepsilon}(X_t-y_0)]|\le 1\wedge(C|\theta|^{-(1+\gamma)})\ (\forall |\theta|\ge 1)
\end{equation}
holds for some positive constants $C$ and $\gamma$, then for any $\gamma'\in(0,\gamma)$ the density function of the $X$ exists and is $\gamma'$-H\"older continuous on $B_{\varepsilon}(y_0)$ at time t.
Here, $\phi_{\varepsilon}$ is an element of $C_b^{\infty}(\mathbb R)$ which satisfies the conditions of Lemma $\ref{Lem1}$. 

\section{Main result}
Let $(\Omega,\mathcal {F}, \{\mathcal {F}_t\}_{t\ge 0},Q)$ be a probability space, 
where $\{\mathcal {F}_t\}_{t\ge 0}$ is the filtration generated by the one dimensional standard Wiener process 
$B:=\{B_t\}_{t\ge 0}$ 
on $(\Omega,\mathcal {F},Q)$.
Consider the following SDE;
\begin{equation}\label{SDE}
X_t=x_0 + \int_0^t \sigma(X_s)d{B}_s + \int_0^t b(X_s)ds,\ \ t\in[0,T]
\end{equation}
for a finite $T>0$ and $x_0\in\mathbb R,$ where $\sigma$ and $b$ are  Borel measurable functions.

\subsection{Assumptions}

$(H1)$:
There exists some $y_0\in\mathbb R$ and $\varepsilon>0$ 
such that
$\sigma$ and $b$ are bounded on the open ball 
$
B_{6\varepsilon}(y_0):=\{y\in\mathbb{R};|y-y_0|<6\varepsilon\}
$. 
Moreover, 
$\displaystyle
\inf_{x\in B_{6\varepsilon}(y_0) }|\sigma(x)|>\sigma_0>0
$
for some constant $\sigma_0$.
\\
$(H2)$
$\sigma\in C^\infty _b(B_{6\varepsilon}(y_0))$.
\\ 
$(H3)$:
$\displaystyle\sigma^{-1}b:=\frac{b}{\sigma}$ is $\alpha$-H\"{o}lder continuous on $
B_{6\varepsilon}(y_0),
$ 
where $\alpha\in(0,1)$.
\begin{remark}
The assumption $(H3)$ implies that the function $b$ is $\alpha$-H\"older continuous if 
$\sigma$ belongs to $C_b^1$. 
\end{remark}
If our assumptions $(H1),$ $(H2)$ and $(H3)$ are satified on $\mathbb R$, coefficients $\sigma$ and $b$ also satisfy the assumptions in Fournier and Primtems \cite{F1}. 
However their method does not apply if one wants to study the smoothness of the density.  

We assume throughout the article the weak existence of solutions for (\ref{SDE}). Sufficient conditions are stated in e.g. \cite{Cherny}. Our main result is the following theorem.
\begin{theorem}
\label{main}
Assume $(H1), (H2)$ and $(H3)$. 
Then for any initial value $x_0$, any $0<t\le T$ and any $0<\gamma<\alpha,$ 
the distribution of $X_t$ has a $\gamma$-H\"older continuous density on $B_{\varepsilon}(y_0)$. 
\end{theorem}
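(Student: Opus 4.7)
My plan is to establish the Fourier-side estimate \eqref{Goal}, which by Corollary \ref{Cor1} and Lemma \ref{Lem1} immediately delivers the local H\"older regularity of the density. Following the roadmap in the introduction, I would (i) remove the drift via a Girsanov change of measure, (ii) localize with a stopping time so that the diffusion coefficient can be replaced by a globally smooth one, and (iii) attack the resulting characteristic function through Malliavin integration by parts, using a mollification argument to cope with the non-smoothness of the Girsanov weight.

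For steps (i)--(ii), I would extend $\sigma$ and $b$ outside $B_{6\varepsilon}(y_0)$ to globally bounded functions $\bar\sigma\in C_b^\infty(\mathbb R)$ (still uniformly elliptic) and bounded measurable $\bar b$ that agree with $\sigma,b$ on $B_{5\varepsilon}(y_0)$, and let $\bar X$ solve the SDE driven by $(\bar\sigma,\bar b)$. Setting $\tau:=\inf\{s\ge 0:X_s\notin B_{4\varepsilon}(y_0)\}$, pathwise agreement up to $\tau$ together with the support of $\phi_\varepsilon$ allow me to replace $X_t$ by $\bar X_t$ in $E_Q[e^{i\theta X_t}\phi_\varepsilon(X_t-y_0)]$. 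A Girsanov transformation then produces a measure $P$ and a $P$-Brownian motion $W$ under which $d\bar X_s=\bar\sigma(\bar X_s)dW_s$, giving
\begin{equation*}
E_Q[e^{i\theta\bar X_t}\phi_\varepsilon(\bar X_t-y_0)]=E_P[\mathcal E_t\,e^{i\theta\bar X_t}\phi_\varepsilon(\bar X_t-y_0)],
\end{equation*}
where $\mathcal E_t$ is the exponential martingale generated by the $\alpha$-H\"older function $\bar b/\bar\sigma$. Under $P$, $\bar X$ has globally smooth bounded coefficients and is uniformly elliptic, so $\bar X_t\in\mathbb D^\infty$ with a non-degenerate Malliavin covariance, and Proposition \ref{Int_By_Parts} applies.

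The core difficulty is now that $\mathcal E_t\notin\mathbb D^{1,2}$. I would regularize $\bar b/\bar\sigma$ by convolution with a mollifier at scale $1/n$ to obtain $(b/\sigma)_n\in C_b^\infty$, with $\|(b/\sigma)_n-\bar b/\bar\sigma\|_\infty\le Cn^{-\alpha}$ and polynomial control $\|(b/\sigma)_n^{(k)}\|_\infty\le C_k n^{k-\alpha}$, and form the corresponding smooth exponential martingale $\mathcal E_t^n$. Splitting
\begin{equation*}
E_P[\mathcal E_t\,e^{i\theta\bar X_t}\phi_\varepsilon(\bar X_t-y_0)] = E_P[(\mathcal E_t-\mathcal E_t^n)\,e^{i\theta\bar X_t}\phi_\varepsilon(\bar X_t-y_0)] + E_P[\mathcal E_t^n\,e^{i\theta\bar X_t}\phi_\varepsilon(\bar X_t-y_0)],
\end{equation*}
I would bound the first term by $\|\mathcal E_t-\mathcal E_t^n\|_{L^2}\le Cn^{-\alpha}$ using It\^o's formula and standard exponential moment estimates, and handle the second term by iterating the integration by parts formula $N$ times against $e^{i\theta\bar X_t}$. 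After optimizing $n$ as a power of $|\theta|$ and letting $N$ be large, the two error contributions balance to produce polynomial decay in $|\theta|$, which should exceed $|\theta|^{-(1+\gamma)}$ for any $\gamma<\alpha$.

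The hardest part is the quantitative book-keeping of the smooth term: each integration by parts yields an extra $|\theta|^{-1}$ but also an extra factor depending on the Malliavin Sobolev norms of $\mathcal E_t^n\phi_\varepsilon(\bar X_t-y_0)$, which deteriorate with $n$ because derivatives of $(b/\sigma)_n$ blow up. Getting the norms of $H_k(\bar X_t,\mathcal E_t^n\phi_\varepsilon(\bar X_t-y_0))$ sharp in $n$, and showing that the inverse Malliavin covariance of $\bar X_t$ (along with all its Sobolev norms) is controlled independently of $n$ thanks to the uniform ellipticity $(H1)$ on the support of $\phi_\varepsilon$, is where the real work lies and is what ultimately dictates the admissible exponent in the final balance. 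Once this is in hand, choosing $n$ as a suitable power of $|\theta|$ and $N$ large enough yields \eqref{Goal} with an exponent that can be made arbitrarily close to $1+\alpha$ from below, completing the proof.
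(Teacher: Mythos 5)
Your roadmap (localize, remove the drift by Girsanov, integrate by parts, approximate the non-smooth weight) matches the paper's in spirit, but two of your concrete steps fail, and both for the same underlying reason: your scheme contains no small time parameter. First, the localization from time $0$ is not justified. The support of $\phi_\varepsilon$ constrains only the position at time $t$, not the whole path, so on $\{\tau\le t\}$ the process may exit $B_{4\varepsilon}(y_0)$ and still return to $B_{2\varepsilon}(y_0)$ at time $t$; hence $E_Q[e^{i\theta X_t}\phi_\varepsilon(X_t-y_0)]$ differs from $E_Q[e^{i\theta \bar X_t}\phi_\varepsilon(\bar X_t-y_0)]$ by a term controlled only by $Q(\tau\le t,\ X_t\in B_{2\varepsilon}(y_0))$, a fixed quantity with no decay in $|\theta|$ (and for a general initial value $x_0\notin B_{4\varepsilon}(y_0)$ your $\tau$ is simply $0$). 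Moreover, the surviving indicator $1_{\{t<\tau\}}$ is neither Malliavin differentiable nor measurable with respect to a convenient sub-$\sigma$-field, so it obstructs the integration by parts as well. The paper avoids both problems by localizing only on a window $[t-\delta,t]$ near the terminal time: with the stopping times $\nu,\tau$ and the events $A,C$, the bad events have probability $O(\delta^{n})$ for every $n$ (Lemma \ref{lem:EST1} and \eqref{EST2}), and the remaining indicator is traded, via an exact algebraic identity, for the $\mathcal F_{t-\delta}$-measurable $1_{\{X_{t-\delta}\in\overline{B_{3\varepsilon}(y_0)}\}}$.

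Second, mollifying $\bar b/\bar\sigma$ at scale $1/n$ over the whole (fixed-length) interval cannot deliver the decay $|\theta|^{-(1+\gamma)}$ required by \eqref{Goal}. Your error term is only $\|\mathcal E_t-\mathcal E^n_t\|_{L^2}\lesssim n^{-\alpha}$, so you are forced to take $n\gtrsim |\theta|^{(1+\gamma)/\alpha}$, i.e. $n=|\theta|^{c}$ with $c>1/\alpha>1$. But each integration by parts against $\mathcal E^n_t$ brings in Malliavin--Sobolev norms of $\mathcal E^n_t$ which contain $\|((b/\sigma)_n)^{(k)}\|_\infty\sim n^{k-\alpha}$ with no compensating small factor, since the time interval has fixed length; after $N$ integrations by parts the smooth term is of order $|\theta|^{-N}n^{N-\alpha}=|\theta|^{N(c-1)-c\alpha}$, which for $c>1$ grows with $N$ and in fact does not decay at all once $c\ge(1+\gamma)/\alpha$. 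So the two contributions never balance above exponent $1$, which is not even enough for existence of a continuous density via Corollary \ref{Cor1}; the claimed exponent close to $1+\alpha$ is precisely the point that cannot be reached this way. The missing idea is to put the smallness in time rather than in a mollification parameter: the paper freezes the drift at time $t-\delta$, writing $Z_t-1=(\bar\sigma^{-1}\bar b)(X_{t-\delta})(W_t-W_{t-\delta})$ plus a stochastic integral over $[t-\delta,t]$ whose $L^2$ norm is $O(\delta^{\frac{1+\alpha}{2}})$ by the H\"older property of $\bar\sigma^{-1}\bar b$ (the extra $\delta^{1/2}$ comes from the short window, see \eqref{Ineq3}); the frozen coefficient is bounded and $\mathcal F_{t-\delta}$-measurable, so the integration by parts on $[t-\delta,t]$ applies with constants independent of the regularity of $b$ and yields the factor $|\theta\delta^{1/2}|^{-n_2}$. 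Choosing $\delta=|\theta|^{-\beta}$ with $\frac{2(1+\gamma)}{1+\alpha}<\beta<2$ and $n,n_2$ large then gives \eqref{Goal}; no mollification of $b$ is needed.
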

\begin{remark}
We define 
\begin{align*}
I:=\{y\in\mathbb R;P(T_y<\infty)>0\},
\end{align*}
where
\begin{align*}
T_y:=\inf \{t>0;X_t=y\}.
\end{align*}
Then $I$ forms an interval when $I$ is not a point $($see Section $3.5$ $($page $92)$ in It\^o-McKean\cite{I1}$)$. 
The process $X$ does not go out from $I$, hence the support of the distribution of $X_t$ is contained in the closure of $I$. 
Thus we may concentrate our attention on the interval $I$, although in assumption $(H1)$ we may pick $y_0\in\mathbb R$ which belongs to the complement of $I$ and obtain the existence of a density (which is zero). 
\end{remark}
\section{Estimate of the characteristic function}
We assume without loss of generality that the $\alpha$-H\"older continuity constant of $\sigma^{-1}b$ is equal to one. 
Now we start the study of the characteristic function of $X_t$. 

\subsection{Change of the measure and localization}
Fix $0<t<T.$

We define the coefficients 
$
\bar\sigma(y):=\sigma(\lambda(y))
$ and
$
\bar b(y):=b(\lambda(y))
$
where $\lambda\in C_b^{\infty}$ (a truncation function) is defined by 
\begin{align*}
\lambda(y)=
\left\{ 
\begin{array}{ll}
y;\ \ &\text {if } |y-y_0|\le 4\varepsilon \\
y_0 + 5\varepsilon\frac{y-y_0}{|y-y_0|};\ &\text{if } |y-y_0|\ge 5\varepsilon \\
\end{array}
\right.
\end{align*}
and $\lambda(y)\in\overline{B_{5\varepsilon}(y_0)}$ for all $y\in\mathbb R$. 
As a consequence of $(H1)$ and $(H2)$, 
$\bar\sigma$ is an $C_b^{\infty}$ extension of $\sigma|_{B_{4\varepsilon}(y_0)}$ and 
$\bar\sigma^{-1}\bar b$ is $\alpha$-H\"older continuous on $\mathbb R$.

Let $0<\delta<(t\wedge 1)$. Define
\begin{equation}\label{loc_SDE}
\bar X_s(v,y):= y + \int_v^s\bar\sigma(\bar X_u(v,y))dB_u + \int_v^s\bar b(\bar X_u(v,y))du,
\end{equation}
\begin{equation*}
\nu :=\inf\{s\ge t-\delta;X_s\in\overline{B_{3\varepsilon}(y_0)}\}
\end{equation*}
and
\begin{equation*}
\tau :=\inf\{s\ge \nu;X_s\notin\overline{B_{4\varepsilon}(y_0)}\}.
\end{equation*}
Define the sets
\begin{align*}
A:=\{ \phi_{\varepsilon}(X_t-y_0)>0;\nu = t - \delta,t<\tau \} 
\end{align*}
and
\begin{flalign*}
C:=\{ \phi_{\varepsilon}(X_t-y_0)>0;\sup_{0\le s \le \delta}|\bar X_{\nu+s}(\nu,X_{\nu})-X_{\nu}|\ge\varepsilon \}
\setminus A.
\end{flalign*}
Then we have 
$
\{\phi_{\varepsilon}(X_t-y_0)>0\}=A\cup C.
$
Hence, as $A\cap C=\emptyset$, then
\begin{flalign}
\label{eq:dec}
\!\!\!\! E_Q[e^{i\theta X_t}\phi_{\varepsilon}(X_t-y_0)]
&\! =\! E_Q[e^{i\theta X_t}\phi_{\varepsilon}(X_t-y_0)1_C]+E_Q[e^{i\theta X_t}\phi_{\varepsilon}(X_t-y_0)1_A].
\end{flalign}
\if0
\begin{flalign}
\label{eq:dec}
E_Q[e^{i\theta X_t}\phi_{\varepsilon}(X_t-y_0)]
&=E_P[e^{i\theta X_t}\phi_{\varepsilon}(X_t-y_0)Z_t1_C]+E_P[e^{i\theta X_t}\phi_{\varepsilon}(X_t-y_0)Z_t1_A].
\end{flalign}
\fi

The next step in the proof is to remove the coefficient $\bar b$ from $(\ref{loc_SDE})$ in the case of 
$(v,y)=(t-\delta,X_{t-\delta})$ 
by changing the measure. 
Define the stochastic processes for $ t-\delta\le s\le T$ 
\begin{equation*}
W_s:=B_s + \int_{t-\delta}^s(\bar\sigma^{-1}\bar b)(\bar X_u(t-\delta,X_{t-\delta}))du,
\end{equation*}
\begin{equation*}
Z_s:=\exp\left(\int_{t-\delta}^s(\bar\sigma^{-1}\bar b)(\bar X_u(t-\delta,X_{t-\delta}))dB_u 
+ 
\frac{1}{2}\int_{t-\delta}^s|(\bar\sigma^{-1}\bar b)(\bar X_u(t-\delta,X_{t-\delta}))|^2du\right)\
\end{equation*}
and introduce the probability measure $P$ as
\begin{equation}\label{measure}
\frac{dP}{dQ}\Big|_{\mathcal F_s}=Z_s^{-1}\ (t-\delta\le s\le T).
\end{equation}
Then $\bar X(t-\delta,X_{t-\delta})$ satisfies the following SDE;
\begin{equation*}
\bar X_s(t-\delta,X_{t-\delta})=X_{t-\delta} + \int_{t-\delta}^s\bar\sigma(\bar X_u(t-\delta,X_{t-\delta}))dW_u.
\end{equation*} 
\begin{remark}
Due to $(H1)$ and the boundedness of $\bar\sigma^{-1}\bar b$, $Z^{-1}$ satisfies the Novikov condition.
Hence, under the measure P, $W$ is a one dimensional Wiener process. In order to apply Malliavin Calculus in the setting given in Section \ref{intM} we may change probability spaces without any further mention.
\end{remark}
Let us remark some general properties of stochastic processes of exponential type.
\begin{lem}\label{Lem2}$Z$ satisfies the following SDE:
\begin{equation}\label{Z1}
Z_t=1+\int_{t-\delta}^tZ_s(\bar\sigma^{-1}\bar b)(\bar X_s(t-\delta,X_{t-\delta}))dW_s.
\end{equation}
In general, for predictable bounded processes $\psi$ $($the lowest upper bound is denoted by $\|\psi\|_{\infty}),$ we have that processes of the type 
\begin{equation*}\label{Z2}
Z_t=1+\int_{t-\delta}^tZ_s\psi(s)dW_s=\exp\left(\int_{t-\delta}^t\psi(s)dW_s - \frac{1}{2}\int_{t-\delta}^t|\psi(s)|^2ds\right)
\end{equation*}
satisfy that
$$
E[Z_t^p]
\le
\exp\left(\frac{p(p-1)}{2}\delta\|\psi\|_{\infty}^2\right).
$$
\end{lem}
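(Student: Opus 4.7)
The plan is to treat both assertions via standard stochastic calculus with Doléans-Dade exponentials, working under the measure $P$ (under which $W$ is Brownian motion by Girsanov).

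For the SDE representation, I would first rewrite the defining expression for $Z_s$ purely in terms of the $P$-Brownian motion $W$. Using the identity $dB_u = dW_u - (\bar\sigma^{-1}\bar b)(\bar X_u(t-\delta,X_{t-\delta}))\,du$ inside the stochastic integral in the definition of $Z_s$, the $+\frac{1}{2}\int|\cdot|^2 du$ term combines with the drift adjustment to produce the standard form
\begin{equation*}
Z_s=\exp\Bigl(\int_{t-\delta}^s \psi(u)\,dW_u-\tfrac{1}{2}\int_{t-\delta}^s|\psi(u)|^2\,du\Bigr),\quad \psi(u):=(\bar\sigma^{-1}\bar b)(\bar X_u(t-\delta,X_{t-\delta})).
\end{equation*}
Since $\psi$ is predictable and bounded (by $(H1)$ and the global boundedness of $\bar\sigma^{-1}\bar b$), I then apply It\^o's formula to $F(x,y)=e^{x-y/2}$ with $x=\int_{t-\delta}^s\psi\,dW$ and $y=\int_{t-\delta}^s|\psi|^2\,du$. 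The drift coming from $-\tfrac12 F_y$ and the second-order term $\tfrac12 F_{xx}|\psi|^2$ cancel exactly, leaving only $dZ_s=Z_s\psi(s)\,dW_s$, which is precisely $(\ref{Z1})$. The same computation proves the general statement with an arbitrary predictable bounded $\psi$.

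For the $L^p$ bound, I would use the classical trick of extracting a genuine exponential martingale from $Z_t^p$. Write
\begin{equation*}
Z_t^p=\exp\Bigl(p\!\int_{t-\delta}^t\!\psi\,dW-\tfrac{p^2}{2}\!\int_{t-\delta}^t\!|\psi|^2\,du\Bigr)\cdot\exp\Bigl(\tfrac{p^2-p}{2}\!\int_{t-\delta}^t\!|\psi|^2\,du\Bigr).
\end{equation*}
The first factor is itself a Doléans-Dade exponential with integrand $p\psi$, which is still bounded; hence Novikov's condition is trivially satisfied and it is a true $P$-martingale of mean $1$. The second factor is deterministic and bounded pointwise by $\exp\bigl(\tfrac{p(p-1)}{2}\delta\|\psi\|_\infty^2\bigr)$ since the integration interval has length $\delta$. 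Taking expectations and combining the two yields the stated inequality.

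The argument is essentially routine; the only real point to be careful about is ensuring that, once one passes to the $P$-Brownian motion $W$, the Itô formula and Novikov's condition are invoked on the correct probability space. Bounded\-ness of $\psi$ is what makes everything go through cleanly, and both ingredients of the lemma are immediate consequences of this representation.
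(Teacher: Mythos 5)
Your argument is correct and follows essentially the same route as the paper: rewrite $Z$ via $dB_u=dW_u-\psi(u)\,du$ so that it becomes the Dol\'eans-Dade exponential of $\int\psi\,dW$ under $P$ (which is exactly the paper's one-line observation, and immediately gives (\ref{Z1})), and then bound $E[Z_t^p]$ by factoring out the exponential martingale associated with $p\psi$ and estimating the remaining factor $\exp\bigl(\tfrac{p(p-1)}{2}\int_{t-\delta}^t|\psi(u)|^2\,du\bigr)$ by $\exp\bigl(\tfrac{p(p-1)}{2}\delta\|\psi\|_{\infty}^2\bigr)$, just as in the paper. The only slip is calling that remaining factor ``deterministic''---it is random when $\psi$ is---but since you only use its almost sure bound and the martingale factor is nonnegative with expectation at most one, the conclusion is unaffected.
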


\begin{proof}
For the first property, it is enough to note that $dB_s=dW_s + \bar\sigma^{-1}\bar b(\bar X_s(t-\delta,X_{t-\delta}))ds$. 

Since $W$ is a Wiener process under $P$, $Z$ is a $\mathcal{F}$-martingale under $P$ and hence for any $p>1,$
\begin{align*}
E[Z_t^p]
&\le
E\left [\exp\left(\int_{t-\delta}^t p\psi(s)dW_s - \frac{1}{2}\int_{t-\delta}^t|p\psi(s)|^2ds
+\frac{p(p-1)}{2}\int_{t-\delta}^t|\psi(s)|^2ds\right)\right ]\\
&\le
\exp\left(\frac{p(p-1)}{2}\delta\|\psi\|_{\infty}^2\right).
\end{align*}
\end{proof}
\subsection{Proof of the main theorem}

\begin{proof}
Now we turn to the proof of Theorem 1. 
By Lemmas \ref{lem:EST1} and \ref{lem:EA3} in the Appendix applied to (\ref{eq:dec}), 
we obtain that for some positive constants $K_n,\ M_n,\ C_{\varepsilon,n_2},\ C_{\alpha}$ and $\tilde C_{\varepsilon,n_2}$ 
the following inequality is satisfied
\begin{flalign}\label{EST7}
&|E_Q[e^{i\theta X_t}\phi_{\varepsilon}(X_t-y_0)]|\\
\nonumber&\le
2\varepsilon^{-2n}K_n\left(M_n\|\bar\sigma\|^{2n}_{\infty}\delta^{n}+\delta^{2n}\|\bar b\|_{\infty}^{2n}\right)\\
\nonumber
&+C_{\varepsilon,n_2}|\theta\delta^{\frac{1}{2}}|^{-n_2}
+C_{\alpha}\delta^{\frac{1+\alpha}{2}}
+\|\bar\sigma^{-1}\bar b\|_{\infty}\tilde C_{\varepsilon,n_2}|\theta\delta^{\frac{1}{2}}|^{-n_2},
\end{flalign}
Since $\delta \in(0,t\wedge 1)$ is an arbitrary number, 
we can take 
\begin{equation*}
\delta:=|\theta|^{-\beta},
\end{equation*}
for $|\theta|>(t\wedge 1)^{-\frac 1\beta}$ and any $\beta>0.$
If we denote by $\bar {C}_{\varepsilon ,n,n_2}$ the maximum of all the coefficients of $\theta$ appearing in 
 (\ref{EST7}), we rewrite that inequality as
\begin{flalign*}
&|E_Q[e^{i\theta X_t}\phi_{\varepsilon}(X_t-y_0)]|
\le
\bar {C}_{\varepsilon ,n,n_2}
(
|\theta|^{-n\beta}
+
|\theta|^{-2n\beta}
+
|\theta|^{-\frac{(2-\beta)n_2}{2}}
+
|\theta|^{-\frac{(1+\alpha)\beta}{2}}
)
\end{flalign*}
for $(t\wedge 1)^{-\frac 1\beta}<|\theta|$ and $\frac{2}{1+\alpha}<\beta<2$.
Since $n$ and $n_2$ are arbitrary, 
if we choose $\gamma\in(0,\alpha)$, 
$\beta$ as
\begin{equation*}
\frac{2(1+\gamma)}{1+\alpha}<\beta <2
\end{equation*}
and sufficiently large $n$ and $n_2$, then
by (\ref{Goal}), $X_t$ has a $\gamma$-H\"older continuous density on $B_{\varepsilon}(y_0).$
\end{proof}
\begin{remark}
1. Note that as $\beta$ is chosen closer to $2$, $n_2$ has to be chosen bigger. Therefore in comparison with the classical proofs of the regularity of the density, we need higher regularity of $\sigma$ in order to obtain $\gamma$-H\"older properties of the density for $\gamma$ closer to $\alpha$. 

2. Note that the term that decided the rate of decrease for the localized characteristic function was $\delta^{\frac{1+\alpha}2}$ which is the approximation term for the Girsanov change of measure and which strongly uses the H\"older continuity of $b$ $($see the proof of Lemma $\ref{lem:EA3}$. Therefore even if the other terms may have a faster rate of decrease this will not improve the final result. 
\end{remark}
\section{Conclusions}
We have proved that the regularity of the diffusion coefficient can help transfer the irregularity of the drift to the density function in contrast to the role played by the drift in \cite{F1} and \cite{S1}. In both of these results the drift seems does not seem to play any important role. In this article, we intended to point out that this is not the case and that the regularity of the drift may play an important role in determining the regularity of the density. This is the point where the integration by parts formula of Malliavin Calculus plays an important role in comparison with the previously mentioned results. 

In fact, in a related research, we intend to show, using a more complicated technique ( this involves a more complex version of the technique introduced in \cite{K2}) in the case that the diffusion coefficient is constant, that there are situations where the drift is the determining factor in the regularity of the density of $X_t$.
\section{Appendix}
\subsection{Estimate of (\ref{eq:dec}) on the event C}
\ 

\begin{lem}
\label{lem:EST1}
Under $(H1)$ and $(H2)$, we have the following estimate:
\begin{equation}
\label{EST1}
|E_Q[e^{i\theta X_t}\phi_{\varepsilon}(X_t-y_0)1_C]|
\le
\varepsilon^{-2n}K_n\left(M_n\|\bar\sigma\|^{2n}_{\infty}\delta^{n}+\delta^{2n}\|\bar b\|_{\infty}^{2n}\right),
\end{equation}
where $K_n$ and $M_n$ are constants depend only on $n$.
\end{lem}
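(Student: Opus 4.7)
The plan is to use the crude bound $|e^{i\theta X_t}\phi_\varepsilon(X_t-y_0)|\le 1$ to reduce the whole estimate to a bound on $Q(C)$, and then control this probability by an $L^{2n}$-moment inequality. Since the right-hand side of (\ref{EST1}) carries no $\theta$-dependence, the characteristic-function factor plays no role: we immediately have
\[
|E_Q[e^{i\theta X_t}\phi_\varepsilon(X_t-y_0)1_C]| \le Q(C).
\]

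By the very definition of $C$, the key inclusion
\[
C \subset \left\{\sup_{0\le s\le\delta}|\bar X_{\nu+s}(\nu,X_\nu)-X_\nu|\ge\varepsilon\right\}
\]
holds, so Chebyshev-Markov at order $2n$ gives
\[
Q(C) \le \varepsilon^{-2n}\, E_Q\!\left[\sup_{0\le s\le\delta}|\bar X_{\nu+s}(\nu,X_\nu)-X_\nu|^{2n}\right].
\]
The whole problem thus reduces to a standard $L^{2n}$-increment estimate for the regularized SDE (\ref{loc_SDE}) over a time window of length $\delta$ starting at the stopping time $\nu$.

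To carry out this estimate, I would split, using (\ref{loc_SDE}),
\[
\bar X_{\nu+s}(\nu,X_\nu) - X_\nu = \int_\nu^{\nu+s}\bar\sigma(\bar X_u(\nu,X_\nu))\,dB_u + \int_\nu^{\nu+s}\bar b(\bar X_u(\nu,X_\nu))\,du,
\]
and treat each piece separately. The Burkholder-Davis-Gundy inequality applied to the martingale part, together with the uniform boundedness of $\bar\sigma$ (inherited from composition with the truncation $\lambda$), yields a contribution of the form $M_n\|\bar\sigma\|_\infty^{2n}\delta^n$. For the drift part, Jensen's inequality and boundedness of $\bar b$ give a contribution of the form $\|\bar b\|_\infty^{2n}\delta^{2n}$. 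The elementary bound $(a+b)^{2n}\le 2^{2n-1}(a^{2n}+b^{2n})$ then produces exactly (\ref{EST1}), with a constant $K_n$ absorbing the combinatorial and BDG factors.

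The only subtle bookkeeping point is that the stochastic integral starts at a stopping time rather than a deterministic time. This is entirely routine and can be handled either by recasting the integral as $\int_0^{\nu+s} 1_{u\ge\nu}\bar\sigma(\bar X_u(\nu,X_\nu))\,dB_u$ and applying BDG to this ordinary martingale, or by passing to the shifted Brownian motion $\tilde B_s:=B_{\nu+s}-B_\nu$ via the strong Markov property. Neither step poses a real obstacle; the conceptual content of the lemma is simply that the decomposition $\{\phi_\varepsilon(X_t-y_0)>0\}=A\cup C$ forces $\omega\in C$ into a regime where the regularized process $\bar X$ has moved by at least $\varepsilon$ over a time interval of length at most $\delta$, so that Chebyshev plus standard SDE moment bounds close the argument.
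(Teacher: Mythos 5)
Your proposal is correct and follows essentially the same route as the paper: bound the complex exponential and cutoff by $1$ to reduce to $Q(C)$, apply Markov's inequality at order $2n$ to $\sup_{0\le s\le\delta}|\bar X_{\nu+s}(\nu,X_\nu)-X_\nu|$, and then split into the martingale and drift parts, estimated via Burkholder--Davis--Gundy (the paper also cites Doob) and the boundedness of $\bar\sigma$ and $\bar b$. Your explicit remark on the stochastic integral starting at the stopping time $\nu$ is a harmless elaboration of a point the paper passes over silently.
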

\begin{proof} 
Using Markov's inequality, we have
\begin{flalign*}
Q(C)&\le Q\left(\sup_{0\le s \le \delta}|\bar X_{\nu+s}(\nu,X_{\nu})- X_{\nu}|\ge\varepsilon\right)\\
&\le \varepsilon^{-2n}E_Q\left[\sup_{0\le s \le \delta}|\bar X_{\nu+s}(\nu,X_{\nu})- X_{\nu}|^{2n}\right]\\
&\le \varepsilon^{-2n}K_n\left(E_Q\left[\sup_{0\le s \le \delta}|\int_{\nu}^{s+\nu}\bar\sigma(\bar X_u(\nu,X_{\nu}))dB_u|^{2n}\right]
+E_Q\left[\sup_{0\le s \le \delta}|\int_{\nu}^{s+\nu}\bar b(\bar X_u(\nu,X_{\nu}))du|^{2n}\right]\right),
\end{flalign*}
where $K_n$ is a constant which depends only on $n.$ 

Since $\bar\sigma$ and $\bar b$ are bounded, 
by Doob's inequality and Burkholder-Davis-Gundy inequality, we have
\begin{align*}
&\varepsilon^{-2n}K_n\left(E_Q\left[\sup_{0\le s \le \delta}|\int_{\nu}^{s+\nu}\bar\sigma(\bar X_u(\nu,X_{\nu}))dB_u|^{2n}\right]
+E_Q\left[\sup_{0\le s \le \delta}|\int_{\nu}^{s+\nu}\bar b(\bar X_u(\nu,X_{\nu}))du|^{2n}\right]\right)\\
&\le
\varepsilon^{-2n}K_n\left(M_nE_Q\left[\left\{\int_\nu^{\nu+\delta}(\bar\sigma(\bar X_u(\nu,X_{\nu}))^2du\right\}^{n}\right]
+(\delta\|\bar b\|_{\infty})^{2n}\right)\\
&\le
\varepsilon^{-2n}K_n\left(M_n\|\bar\sigma\|^{2n}_{\infty}\delta^{n}
+(\delta\|\bar b\|_{\infty})^{2n}\right)
\end{align*}
for any $n\in\mathbb{N}$, where
$M_n$ is a constant depends only on $n.$ 
Therefore $(\ref{EST1})$ follows.
\end{proof}
\subsection{Estimate of (\ref{eq:dec}) on the event A}
\ 

Now we turn to estimate the second term of (\ref{eq:dec}).
\begin{lem}
\label{lem:EA3}
Under $(H1),(H2)$ and $(H3)$, we have the following estimate:
\begin{align*}
|E_Q[e^{i\theta X_t}\phi_{\varepsilon}(X_t-y_0)1_A]|
&\le
\varepsilon^{-2n}K_n\left(M_n\|\bar\sigma\|^{2n}_{\infty}\delta^{n}+\delta^{2n}\|\bar b\|_{\infty}^{2n}\right)\\
&+
C_{\varepsilon,n_2}|\theta\delta^{\frac{1}{2}}|^{-n_2}
+
C_{\alpha}\delta^{\frac{1+\alpha}{2}}+
\|\bar\sigma^{-1}\bar b\|_{\infty}\tilde C_{\varepsilon,n_2}|\theta\delta^{\frac{1}{2}}|^{-n_2}.
\end{align*}
\end{lem}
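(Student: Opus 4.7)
The plan is first to replace $X_t$ on the event $A$ by the regularized process $\bar X_t:=\bar X_t(t-\delta,X_{t-\delta})$, then perform the Girsanov change of measure, and finally apply Malliavin integration by parts after approximating the non-smooth Girsanov weight. On $A$, the process $X$ stays in $\overline{B_{4\varepsilon}(y_0)}$ on $[t-\delta,t]$, so $\sigma=\bar\sigma$ and $b=\bar b$ along its path; pathwise uniqueness for (\ref{loc_SDE}) then gives $X_s=\bar X_s(t-\delta,X_{t-\delta})$ for $s\in[t-\delta,t]$. I would then decompose $1_A=1_{A_1}-1_{A_1\setminus A}$ with $A_1:=\{X_{t-\delta}\in\overline{B_{3\varepsilon}(y_0)}\}$ which is $\mathcal F_{t-\delta}$-measurable. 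On $A_1\setminus A$ the process $\bar X$ necessarily exits $\overline{B_{4\varepsilon}(y_0)}$ before time $t$, so $\sup_{0\le s\le\delta}|\bar X_{t-\delta+s}-X_{t-\delta}|\ge\varepsilon$; Markov, Doob and BDG applied as in the proof of Lemma \ref{lem:EST1} control this contribution by $\varepsilon^{-2n}K_n(M_n\|\bar\sigma\|_\infty^{2n}\delta^n+\delta^{2n}\|\bar b\|_\infty^{2n})$, accounting for the first summand.

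The main term $E_Q[e^{i\theta\bar X_t}\phi_\varepsilon(\bar X_t-y_0)1_{A_1}]$ is rewritten under $P$ via (\ref{measure}) as $E_P[Z_t e^{i\theta\bar X_t}\phi_\varepsilon(\bar X_t-y_0)1_{A_1}]$, with $\bar X$ now solving the driftless SDE $d\bar X_s=\bar\sigma(\bar X_s)\,dW_s$. Because $\bar\sigma^{-1}\bar b$ is only H\"older continuous, $Z_t\notin\mathbb D^{1,2}$ and I would instead substitute the Malliavin-smooth surrogate
\begin{equation*}
\tilde Z_t:=\exp\!\Bigl(\psi_0\,(W_t-W_{t-\delta})+\tfrac{1}{2}\psi_0^2\,\delta\Bigr),\qquad \psi_0:=(\bar\sigma^{-1}\bar b)(X_{t-\delta}),
\end{equation*}
obtained by freezing the Girsanov integrand at time $t-\delta$. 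Using the It\^o equation $d(Z_s-\tilde Z_s)=(Z_s-\tilde Z_s)\psi(s)\,dW_s+\tilde Z_s(\psi(s)-\psi_0)\,dW_s$ with $\psi(s):=(\bar\sigma^{-1}\bar b)(\bar X_s)$, together with Gronwall, Lemma \ref{Lem2}, the H\"older bound $|\psi(s)-\psi_0|\le|\bar X_s-X_{t-\delta}|^\alpha$ and the BDG estimate $E_P[\sup_{0\le s\le\delta}|\bar X_{t-\delta+s}-X_{t-\delta}|^{2\alpha}]\le C\delta^{\alpha}$, one obtains $\|Z_t-\tilde Z_t\|_{L^2(P)}\le C_\alpha\delta^{(1+\alpha)/2}$; Cauchy--Schwarz then bounds the associated error by $C_\alpha\delta^{(1+\alpha)/2}$, giving the third summand.

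For the remaining piece $E_P[\tilde Z_t e^{i\theta\bar X_t}\phi_\varepsilon(\bar X_t-y_0)1_{A_1}]$ I would condition on $\mathcal F_{t-\delta}$, absorbing $1_{A_1}$ and treating $X_{t-\delta}=y\in\overline{B_{3\varepsilon}(y_0)}$ as a parameter, and apply Proposition \ref{Int_By_Parts} with $\varphi(x)=e^{i\theta x}$, $F=\bar X_t$, $G=\tilde Z_t\phi_\varepsilon(\bar X_t-y_0)$. Each of the $n_2$ integrations by parts extracts a factor $(i\theta)^{-1}$, bounding the conditional expectation by $|\theta|^{-n_2}\,E[|H_{n_2}(\bar X_t,G)|\mid\mathcal F_{t-\delta}]$. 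Because $\bar\sigma\in C_b^\infty$ and $\bar\sigma\ge\sigma_0$, the Malliavin covariance on $[t-\delta,t]$ satisfies $M_{\bar X_t}\asymp\sigma_0^2\delta$ with $L^p$-bounded inverse uniformly in $y$, hence $\|M_{\bar X_t}^{-1}D\bar X_t\|_{n_2,r2^{n_2-1}}^{n_2}\lesssim\delta^{-n_2/2}$ and the prefactor $|\theta|^{-n_2}$ combines into $|\theta\delta^{1/2}|^{-n_2}$. The terms in the expansion of $\|G\|_{n_2,q}$ that leave $\tilde Z_t$ undifferentiated contribute only derivatives of $\phi_\varepsilon$ (of size $\varepsilon^{-k}$) and of $\bar X_t$, producing the summand $C_{\varepsilon,n_2}|\theta\delta^{1/2}|^{-n_2}$; every Malliavin derivative of $\tilde Z_t$ brings out a factor of $\psi_0$ bounded by $\|\bar\sigma^{-1}\bar b\|_\infty$, so the remaining terms contribute $\|\bar\sigma^{-1}\bar b\|_\infty\,\tilde C_{\varepsilon,n_2}|\theta\delta^{1/2}|^{-n_2}$.

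The principal obstacle is the Malliavin-analytic step: securing the sharp scaling $M_{\bar X_t}^{-1}\asymp(\sigma_0^2\delta)^{-1}$ in all $L^p$-norms uniformly in the parameter $y\in\overline{B_{3\varepsilon}(y_0)}$, together with tracking the $(n_2,q)$-norm of $G$ so that the $\|\bar\sigma^{-1}\bar b\|_\infty$-dependence is cleanly isolated. The substitution $Z_t\mapsto\tilde Z_t$ is otherwise elementary, but the exponent $(1+\alpha)/2$ it produces is precisely what forces the restriction $\gamma<\alpha$ in Theorem \ref{main}.
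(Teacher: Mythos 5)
Your proposal is correct and follows the same overall architecture as the paper's proof: identify $X$ with $\bar X(t-\delta,X_{t-\delta})$ on $A$, split off the exit event and bound it exactly as in Lemma \ref{lem:EST1} (giving the $\varepsilon^{-2n}K_n(\cdots)$ term), pass to $P$ by Girsanov, and combine integration by parts on $[t-\delta,t]$ (the covariance scaling $M_{\bar X_t}\gtrsim \sigma_0^2\delta$ uniformly in $y\in\overline{B_{3\varepsilon}(y_0)}$ is exactly what the paper imports from Theorem 2.3 and Corollary 1 of \cite{S2}) with an approximation of the Girsanov weight whose error is controlled through the H\"older continuity of $\bar\sigma^{-1}\bar b$, yielding the rate $\delta^{\frac{1+\alpha}{2}}$. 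The one genuine difference is the approximant: the paper linearizes, writing $Z_t=1+(\bar\sigma^{-1}\bar b)(X_{t-\delta})(W_t-W_{t-\delta})+R$ with $\|R\|_{L^2(P)}\le C_\alpha\delta^{\frac{1+\alpha}{2}}$, and then applies the integration by parts twice, with $G=\phi_\varepsilon(\bar X_t-y_0)$ and $G=\phi_\varepsilon(\bar X_t-y_0)(W_t-W_{t-\delta})$, exploiting that $\psi_0=(\bar\sigma^{-1}\bar b)(X_{t-\delta})$ is bounded and $\mathcal F_{t-\delta}$-measurable; you instead freeze the whole exponential $\tilde Z_t$. Your route gives the same rate, at the cost of having to control the Sobolev norms of $\tilde Z_t$ inside $G$, which also makes your constant $C_{\varepsilon,n_2}$ depend on $\|\bar\sigma^{-1}\bar b\|_\infty$ (harmless, since the theorem only needs some finite constants), whereas the paper's choice keeps $G$ free of the weight and lets it quote \cite{S2} directly. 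Two small repairs: the frozen exponential martingale in terms of $W$ is $\exp\bigl(\psi_0(W_t-W_{t-\delta})-\frac{1}{2}\psi_0^2\delta\bigr)$; with your $+$ sign, $\tilde Z$ acquires a drift $\tilde Z_s\psi_0^2\,ds$ missing from the It\^o equation you wrote (the extra term is only $O(\delta)\le O(\delta^{\frac{1+\alpha}{2}})$, so the rate survives, but the equation as stated is inconsistent with your definition). Also, on $A_1\setminus A$ it is not true that $\bar X$ must exit $\overline{B_{4\varepsilon}(y_0)}$: the subevent $\{\tau>t,\ \phi_\varepsilon(X_t-y_0)=0\}$ is also contained there; its contribution vanishes because on it $\bar X_t=X_t$ and hence $\phi_\varepsilon(\bar X_t-y_0)=0$, so your bound still holds, but the justification should be stated that way (as the paper implicitly does through the factor $\phi_\varepsilon$).
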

\begin{proof}
By the definition of $\bar X,$ on the event $A$,
\begin{align*}
X_t=\bar X_t(t-\delta,X_{t-\delta}).
\end{align*}
Hence, we obtain that 
\begin{align*}
|E_Q[e^{i\theta X_t}\phi_{\varepsilon}(X_t-y_0)1_A]|
&=|E_Q[e^{i\theta \bar X_t(t-\delta,X_{t-\delta})}\phi_{\varepsilon}(\bar X_t(t-\delta,X_{t-\delta}) - y_0)1_A]|.
\end{align*}

Since 
\begin{flalign*}
1_{\{\nu=t-\delta;t<\tau\}}
&=1_{\{\nu=t-\delta;t<\tau\}}1_{\{X_{t-\delta}\in\overline{B_{3\varepsilon}(y_0)}\}}\\
&=(1 - 1_{\{\nu=t-\delta;\tau\le t\}})1_{\{X_{t-\delta}\in\overline{B_{3\varepsilon}(y_0)}\}}\\
&=1_{\{X_{t-\delta}\in\overline{B_{3\varepsilon}(y_0)}\}} - 1_{\{\nu=t-\delta;\tau\le t\}},
\end{flalign*} 
we have
\begin{flalign}\label{Ineq1}
&|E_Q[e^{i\theta \bar X_t(t-\delta,X_{t-\delta})}\phi_{\varepsilon}(\bar X_t(t-\delta,X_{t-\delta}) - y_0)1_A]|\\
\nonumber&\le
|E_Q[e^{i\theta \bar X_t(t-\delta,X_{t-\delta})}\phi_{\varepsilon}(\bar X_t(t-\delta,X_{t-\delta}) - y_0)1_{\{X_{t-\delta}\in\overline{B_{3\varepsilon}(y_0)}\}}]|\\
\nonumber&+
|E_Q[e^{i\theta \bar X_t(t-\delta,X_{t-\delta})}\phi_{\varepsilon}(\bar X_t(t-\delta,X_{t-\delta}) - y_0)1_{\{\nu=t-\delta;\tau\le t\}}]|.
\end{flalign}

By the definitions of $\nu$ and $\tau$, we have
\begin{align*}
\{\nu=t-\delta;\tau\le t\}
\subseteq 
\{\sup_{0\le s \le \delta}|\bar X_{t-\delta +s}(t-\delta,X_{t-\delta})- X_{t-\delta}|\ge\varepsilon\}.
\end{align*}

So, as in Lemma \ref{lem:EST1} we obtain that 
\begin{equation*}
Q(\nu=t-\delta;\tau\le t)\le
\varepsilon^{-2n}K_n\left(M_n\|\bar\sigma\|^{2n}_{\infty}\delta^{n}
+\delta^{2n}\|\bar b\|_{\infty}^{2n}\right).
\end{equation*}

Therefore, we have the following upper bound for the second term in $(\ref{Ineq1})$
\begin{align}\label{EST2}
&|E_Q[e^{i\theta \bar X_t(t-\delta,X_{t-\delta})}\phi_{\varepsilon}(\bar X_t(t-\delta,X_{t-\delta}) - y_0)1_{\{\nu=t-\delta;\tau\le t\}}]|\\
\nonumber&\le
\varepsilon^{-2n}K_n\left(M_n\|\bar\sigma\|^{2n}_{\infty}\delta^{n}
+\delta^{2n}\|\bar b\|_{\infty}^{2n}\right).
\end{align}

For the first term in $(\ref{Ineq1})$, we change the probability measure from $Q$ to $P$ defined by ($\ref{measure}$). That is,
\begin{align*}
&E_Q[e^{i\theta \bar X_t(t-\delta,X_{t-\delta})}\phi_{\varepsilon}(\bar X_t(t-\delta,X_{t-\delta}) - y_0)1_{\{X_{t-\delta}\in\overline{B_{3\varepsilon}(y_0)}\}}]\\
&=
E_P[e^{i\theta \bar X_t(t-\delta,X_{t-\delta})}\phi_{\varepsilon}(\bar X_t(t-\delta,X_{t-\delta}) - y_0)Z_t1_{\{X_{t-\delta}\in\overline{B_{3\varepsilon}(y_0)}\}}].
\end{align*} 
Then we have
\begin{align}\label{EA1}
&|E_P[e^{i\theta \bar X_t(t-\delta,X_{t-\delta})}\phi_{\varepsilon}(\bar X_t(t-\delta,X_{t-\delta}) - y_0)Z_t1_{\{X_{t-\delta}\in\overline{B_{3\varepsilon}(y_0)}\}}]|\\
\nonumber&\le
|E_P[e^{i\theta \bar X_t(t-\delta,X_{t-\delta})}\phi_{\varepsilon}(\bar X_t(t-\delta,X_{t-\delta}) - y_0)
(Z_t-1)1_{\{X_{t-\delta}\in\overline{B_{3\varepsilon}(y_0)}\}}]|\\
\nonumber&+
|E_P[e^{i\theta \bar X_t(t-\delta,X_{t-\delta})}\phi_{\varepsilon}(\bar X_t(t-\delta,X_{t-\delta}) - y_0)
1_{\{X_{t-\delta}\in\overline{B_{3\varepsilon}(y_0)}\}}]|.
\end{align}

Since $1_{\{X_{t-\delta}\in\overline{B_{3\varepsilon}(y_0)}\}}$ is $\mathcal F_{t-\delta}$-measurable, 
using conditional expectation and the Markov property for $\bar X$, we have 
\begin{flalign*}
&|E_P[e^{i\theta \bar X_t(t-\delta,X_{t-\delta})}\phi_{\varepsilon}(\bar X_t(t-\delta,X_{t-\delta}) - y_0)1_{\{X_{t-\delta}\in\overline{B_{3\varepsilon}(y_0)}\}}]|\\
&=|E_P[E_P[e^{i\theta \bar X_t(t-\delta,X_{t-\delta})}\phi_{\varepsilon}(\bar X_t(t-\delta,X_{t-\delta}) - y_0)|\mathcal F_{t-\delta}]1_{\{X_{t-\delta}\in\overline{B_{3\varepsilon}(y_0)}\}}]|\\
&\le
\sup_{y\in\overline{B_{3\varepsilon}(y_0)}}|E_P[e^{i\theta \bar X_t(t-\delta,y)}\phi_{\varepsilon}(\bar X_t(t-\delta,y) - y_0)]|.
\end{flalign*}

As in Proposition $\ref{Int_By_Parts}$, the integration by parts formula of Malliavin calculus in the interval $[t-\delta ,t]$, implies that 
for any $n_2\in\mathbb{N}$ and $y\in\overline{B_{3\varepsilon}(y_0)},$ there exists a random variable $H_{n_2}(\bar X_t(t-\delta,y),\phi_{\varepsilon}(\bar X_t(t-\delta,y) - y_0))\in\mathbb{D^{\infty}}$ such that 
\begin{flalign*}
&E_P\left [\frac{d^{n_2}}{d x^{n_2}}\left(e^{i\theta x}\right)\bigg|_{x=\bar X_t(t-\delta,y)}\phi_{\varepsilon}(\bar X_t(t-\delta,y) - y_0)\right ]\\
&=E_P\left [e^{i\theta X_t(t-\delta,y)}H_{n_2}(\bar X_t(t-\delta,y),\phi_{\varepsilon}(\bar X_t(t-\delta,y) - y_0))\right ].
\end{flalign*}

Furthermore, by Theorem $2.3.$ and Corollary $1$ of \cite{S2} (which are consequences of the application of Proposition \ref{Int_By_Parts} to our situation), there exists a constant $C_{\varepsilon,n_2}$ which depends on $\varepsilon,$ $n_2$ and 
derivatives of $\bar\sigma$ up to the order $n_2$ such that for any $y\in\overline{B_{3\varepsilon}(y_0)},$
\begin{equation}\label{IBP1}
\|H_{n_2}(\bar X_t(t-\delta,y),\phi_{\varepsilon}(\bar X_t(t-\delta,y) - y_0))\|_{L^2(P)}
\le C_{\varepsilon,n_2}\delta^{-\frac{n_2}{2}}.
\end{equation}
In fact,
Theorem $2.3.$ of $\cite{S2}$ tells us that there exists some constant $C^{\star}_{\varepsilon ,n}$ 
such that
\begin{equation*}
\|H_{n_2}(\bar X_t(t-\delta,y),\phi_{\varepsilon}(\bar X_t(t-\delta,y) - y_0))\|_{L^2(P)}
\le
C^{\star}_{\varepsilon ,n_2}\|\phi_{\varepsilon}(\bar X_t(t-\delta,y) - y_0)\|_{n_2,2^{n_2+1}}\delta^{-\frac{n_2}{2}}.
\end{equation*}

On the other hand, thanks to $(H1)$, 
Corollary $1$ of \cite{S2} implies that there exists some constant $C^{\dagger}_{\varepsilon ,n_2}$ 
such that
\begin{equation*}
\|\phi_{\varepsilon}(\bar X_t(t-\delta,y) - y_0)\|_{n_2,2^{n_2+1}}
\le
C^{\dagger}_{\varepsilon ,n_2}.
\end{equation*}

The above constant $C_{\varepsilon ,n_2}$ is the product of these constants 
$
C^{\star}_{\varepsilon ,n_2}
$ and 
$
C^{\dagger}_{\varepsilon ,n_2}
$.

By (\ref{IBP1}) and recalling that $Z$ is a non-negative martingale with mean one, for any $n_2\in\mathbb{N},$ we obtain the following inequality.
\begin{flalign}\label{EST3}
&|E_P[e^{i\theta \bar X_t(t-\delta,X_{t-\delta})}\phi_{\varepsilon}(\bar X_t(t-\delta,X_{t-\delta}) - y_0)1_{\{X_{t-\delta}\in\overline{B_{3\varepsilon}(y_0)}\}}]|\\
\nonumber&\le
C_{\varepsilon,n_2}|\theta\delta^{\frac{1}{2}}|^{-n_2}.
\end{flalign}

However, since $Z_t-1$ is not $\mathcal{F}_{t-\delta}$-measurable and 
we do not assume the smoothness of the coefficient $b,$ 
we can not apply the integration by parts formula for the first term in $(\ref{EA1})$. Instead, we rewrite
 \begin{align*}
Z_t-1
=&\int_{t-\delta}^t(\bar\sigma^{-1}\bar b)(\bar X_u(t-\delta,X_{t-\delta}))Z_u-(\bar\sigma^{-1}\bar b)(X_{t-\delta})dW_u\\
&+(\bar\sigma^{-1}\bar b)(X_{t-\delta})(W_t-W_{t-\delta}).
\end{align*}
Thus we obtain
\begin{align}\label{Ineq2}
&\left|E_P\left[e^{i\theta \bar X_t(t-\delta,X_{t-\delta})}\phi_{\varepsilon}(\bar X_t(t-\delta,X_{t-\delta}) - y_0)(Z_t-1)1_{\{X_{t-\delta}\in\overline{B_{3\varepsilon}(y_0)}\}}\right]\right|\\
\nonumber&\le
E_P\left[\int_{t-\delta}^t|(\bar\sigma^{-1}\bar b)(\bar X_u(t-\delta,X_{t-\delta}))Z_u-(\bar\sigma^{-1}\bar b)(X_{t-\delta})|^2du\right]^{\frac{1}{2}}\\
\nonumber&+
|E_P[e^{i\theta \bar X_t(t-\delta,X_{t-\delta})}\phi_{\varepsilon}(\bar X_t(t-\delta,X_{t-\delta}) - y_0)(\bar\sigma^{-1}\bar b)(X_{t-\delta})(W_t-W_{t-\delta})1_{\{X_{t-\delta}\in\overline{B_{3\varepsilon}(y_0)}\}}]|.
\end{align}

For the first term, by the H\"older continuity of $\bar\sigma^{-1}\bar b$, $(\ref{Z1})$ and H\"older's inequality, we have 
\begin{align}\label{Ineq3}
&E_P\left[\int_{t-\delta}^t|(\bar\sigma^{-1}\bar b)(\bar X_u(t-\delta,X_{t-\delta}))Z_u-(\bar\sigma^{-1}\bar b)(X_{t-\delta})|^2du\right]^{\frac{1}{2}}\\
\nonumber&\le
\sqrt 2\bigg[\int_{t-\delta}^tE_P\left[|(\bar\sigma^{-1}\bar b)(\bar X_u(t-\delta,X_{t-\delta}))-(\bar\sigma^{-1}\bar b)(X_{t-\delta})|^2Z_u^2\right]du\\
\nonumber&+
\int_{t-\delta}^tE_P[|(\bar\sigma^{-1}\bar b)(X_{t-\delta})|^2(Z_u-1)^2]du\bigg]^{\frac{1}{2}}\\
\nonumber&\le
\sqrt2\bigg[
\int_{t-\delta}^tE_P[|\bar X_u(t-\delta,X_{t-\delta})-X_{t-\delta}|^{2\alpha}Z_u^2]du\\
\nonumber&+
\|\bar\sigma^{-1}\bar b\|_{\infty}^2\int_{t-\delta}^t\int_{t-\delta}^uE_P[|(\bar\sigma^{-1}\bar b)(\bar X_v(t-\delta,X_{t-\delta}))Z_v|^2]dvdu
\bigg]^{\frac{1}{2}}\\
\nonumber&\le
\sqrt2\left[
\int_{t-\delta}^tE_P[|\bar X_u(t-\delta,X_{t-\delta})-X_{t-\delta}|^2]^{\alpha}E_P[Z_u^{\frac{2}{1-\alpha}}]^{1-\alpha}du
+
\frac{\|\bar\sigma^{-1}\bar b\|_{\infty}^4}{2}\|Z_t\|_{L^2(P)}^2\delta^2
\right]^{\frac{1}{2}}\\
\nonumber&\le
\sqrt 2\left[\frac 2{1+\alpha}
\|Z_t^2\|_{L^{\frac{1}{1-\alpha}}(P)} \|\bar\sigma\|_{\infty}^{2\alpha}\delta^{1+\alpha}
+\frac{\|\bar\sigma^{-1}\bar b\|_{\infty}^4}{2}\|Z_t\|_{L^2(P)}^2\delta^2
\right]^{\frac{1}{2}}\\
\nonumber&\le
C_{\alpha}\delta^{\frac{1+\alpha}{2}},
\end{align}
where 
\begin{equation*}
C_{\alpha}:=
\left(\frac {2}{\sqrt{1+\alpha}}\|Z_t\|_{L^{\frac{2}{(1-\alpha)}}(P)} \|\bar\sigma\|_{\infty}^{\alpha}\right)
\vee
\left(\|\bar\sigma^{-1}\bar b\|_{\infty}^2\|Z_t\|_{L^2(P)}\right).
\end{equation*}

For the second term of (\ref{Ineq2}), we proceed as  in (\ref{EST3}). 
Since $(\bar\sigma^{-1}\bar b)(X_{t-\delta})$ is bounded  and $\mathcal{F}_{t-\delta}$-measurable,  
we have 
\begin{align*}
&|E_P[e^{i\theta \bar X_t(t-\delta,X_{t-\delta})}\phi_{\varepsilon}(\bar X_t(t-\delta,X_{t-\delta}) - y_0)(\bar\sigma^{-1}\bar b)(X_{t-\delta})(W_t-W_{t-\delta})1_{\{X_{t-\delta}\in\overline{B_{3\varepsilon}(y_0)}\}}]|\\
&\le
\|\bar\sigma^{-1}\bar b\|_{\infty}
\sup_{y\in\overline{B_{3\varepsilon}(y_0)}}|E_P[e^{i\theta \bar X_t(t-\delta,y)}\phi_{\varepsilon}(\bar X_t(t-\delta,y) - y_0)(W_t-W_{t-\delta})]|.
\end{align*}

Now we can apply the integration by parts formula which implies that 
for any $n_2\in\mathbb N$ and $y\in\overline{B_{3\varepsilon}(y_0)}$
there exists a random variable 

$H_{n_2}(\bar X_t(t-\delta,y),\phi_{\varepsilon}(\bar X_t(t-\delta,y) - y_0)(W_t-W_{t-\delta}))\in\mathbb{D}^{\infty}$ 
such that 
\begin{align*}
&E_P\left [\frac{d^{n_2}}{dx^{n_2}}(e^{i\theta x})\bigg|_{x=\bar X_t(t-\delta,y)}\phi_{\varepsilon}(\bar X_t(t-\delta,y) - y_0)(W_t-W_{t-\delta})\right]\\
&=
E_P\left [e^{i\theta \bar X_t(t-\delta,y)}H_{n_2}(\bar X_t(t-\delta,y),\phi_{\varepsilon}(\bar X_t(t-\delta,y ) - y_0)(W_t-W_{t-\delta}))\right ] 
\end{align*}
and by the H\"older inequality for the stochastic Sobolev norms (see Proposition $1.5.6$ of \cite{N1}), its $L^2(P)$-norm is bounded by 
$
C_{\varepsilon,n_2}\delta^{-\frac{n_2}{2}}
c_{n_2}\|(W_t-W_{t-\delta})\|_{n_2,2^{n_2+1}},
$ 
where $c_{n_2}$ is a constant depends only on $n_2.$  

However, the $k$-th order $H$-derivatives of $W_t-W_{t-\delta}$ vanish when $k\ge 2.$ 
Therefore, there exists a positive constant $C$ (independent of $n_2$) such that 
\begin{equation}\label{IBP2}
\|(W_t-W_{t-\delta})\|_{n_2,2^{n_2+1}}
=
\|(W_t-W_{t-\delta})\|_{1,2^{n_2+1}}
\le
C
\end{equation}
and hence, we have
\begin{align}\label{EST6}
&|E_P[e^{i\theta \bar X_t(t-\delta,X_{t-\delta})}\phi_{\varepsilon}(\bar X_t(t-\delta,X_{t-\delta}) - y_0)(Z_t-1)1_{\{X_{t-\delta}\in\overline{B_{3\varepsilon}(y_0)}\}}]|\\
\nonumber&\le
C_{\alpha}\delta^{\frac{1+\alpha}{2}}+
\|\bar\sigma^{-1}\bar b\|_{\infty}\tilde C_{\varepsilon,n_2}|\theta\delta^{\frac{1}{2}}|^{-n_2},
\end{align}
where 
$
\tilde C_{\varepsilon,n_2}:=2C_{\varepsilon,n_2}c_{n_2}.
$

Substituting $(\ref{EST6})$ and $(\ref{EST3})$ into $(\ref{EA1})$, we have 
\begin{align}\label{EA2}
&|E_P[e^{i\theta \bar X_t(t-\delta,X_{t-\delta})}\phi_{\varepsilon}(\bar X_t(t-\delta,X_{t-\delta}) - y_0)Z_t1_{\{X_{t-\delta}\in\overline{B_{3\varepsilon}(y_0)}\}}]|\\
\nonumber&\le
C_{\varepsilon,n_2}|\theta\delta^{\frac{1}{2}}|^{-n_2}
+
C_{\alpha}\delta^{\frac{1+\alpha}{2}}+
\|\bar\sigma^{-1}\bar b\|_{\infty}\tilde C_{\varepsilon,n_2}|\theta\delta^{\frac{1}{2}}|^{-n_2}.
\end{align}

As a result, we have
\begin{align*}
&|E_Q[e^{i\theta \bar X_t(t-\delta,X_{t-\delta})}\phi_{\varepsilon}(\bar X_t(t-\delta,X_{t-\delta}) - y_0)1_A]|\\
\nonumber&\le
\varepsilon^{-2n}K_n\left(M_n\|\bar\sigma\|^{2n}_{\infty}\delta^{n}+\delta^{2n}\|\bar b\|_{\infty}^{2n}\right)\\
&\nonumber+
C_{\varepsilon,n_2}|\theta\delta^{\frac{1}{2}}|^{-n_2}
+
C_{\alpha}\delta^{\frac{1+\alpha}{2}}+
\|\bar\sigma^{-1}\bar b\|_{\infty}\tilde C_{\varepsilon,n_2}|\theta\delta^{\frac{1}{2}}|^{-n_2}
\end{align*}
by substituting $(\ref{EA2})$ and $(\ref{EST2})$ into $(\ref{Ineq1})$.
\end{proof}

\begin{remark}
The above estimate $(\ref{IBP2})$ for the Sobolev norm of the Wiener process is clearly non-optimal. However, as the term appearing in $(\ref{Ineq3})$ decreases slowly, improving the estimate in $(\ref{IBP2})$ will not change the final result. The same comment applies to other terms such as $(\ref{EST2})$.
\end{remark}

\end{document}